\numberwithin{equation}{section}
\numberwithin{figure}{section}
\theoremstyle{plain}
\newtheorem{thm}{\protect\theoremname}[section]
\theoremstyle{definition}
\newtheorem{defn}[thm]{\protect\definitionname}
\theoremstyle{plain}
\newtheorem{lem}[thm]{\protect\lemmaname}
\newtheorem{cor}[thm]{\protect\corollaryname}
\theoremstyle{remark}
\newtheorem{rem}[thm]{\protect\remarkname}
\theoremstyle{plain}
\newtheorem{prop}[thm]{\protect\propositionname}
\providecommand{\corollaryname}{Corollary}
\providecommand{\definitionname}{Definition}
\providecommand{\lemmaname}{Lemma}
\providecommand{\propositionname}{Proposition}
\providecommand{\remarkname}{Remark}
\providecommand{\theoremname}{Theorem}
\begin{document}
\subjclass[2020]{Primary 42C15. Secondary 47B80, 47B90, 94A12.}
\title{Random Operator-Valued Frames in Hilbert Spaces}
\begin{abstract}
We study strongly measurable random bounded operators on separable
Hilbert spaces and analyze two simple iterations driven by independent
random positive contractions. The first, a Kaczmarz-like iteration,
converges in mean square and almost surely and produces a random operator-valued
frame. In the projection case it yields a Parseval identity. The second,
a residual-weighted iteration, enjoys an exact step-by-step identity:
the accumulated analysis terms plus a residual equal the identity
operator. Under a mild mean-coercivity condition, the residual shrinks
at a geometric rate in expectation, vanishes almost surely, and admits
nonasymptotic tail bounds. As a result, the construction delivers
an almost-sure Parseval frame for any independent sequence of positive
contractions, not only projections. 
\end{abstract}

\author{James Tian}
\address{Mathematical Reviews, 535 W. William St, Suite 210, Ann Arbor, MI
48103, USA}
\email{jft@ams.org}
\keywords{operator-valued frames; fusion frames; random operators; positive
contractions; dilation theory; randomized Kaczmarz}

\maketitle
\tableofcontents{}

\section{Introduction}

Frames extend the idea of orthonormal bases in Hilbert spaces by allowing
redundancy. This builtin redundancy makes them stable and robust to
noise, erasures, and other errors. Because of these features, frames
have become an important tool in signal processing, image analysis,
coding theory, and wireless communications. The theory of frames began
with the work of Duffin and Schaeffer \cite{MR47179} and has since
developed in many directions; see \cite{MR3495345} or \cite{MR1757401}
for surveys.

Two important extensions are fusion frames and operator-valued frames.
Fusion frames, introduced by Casazza et al. \cite{MR2419707,MR2964018},
model distributed sensing and reconstruction using families of subspaces
with weights, and they have applications in coding theory, wireless
communication, and parallel processing. Operator-valued frames, developed
by Casazza, Han, and Larson and studied further by Han and Larson
\cite{MR1686653} and Sun \cite{MR2239250}, go further by replacing
vectors or subspaces with bounded operators. This connects frame theory
with $C^{*}$-algebras, dilation theory \cite{MR275190,MR1976867},
and noncommutative harmonic analysis. Operator-valued frames also
appear in quantum information theory, for example in quantum state
tomography. For more recent developments there is a large and growing
literature covering many directions, which will not be surveyed here.

Randomization has become another fruitful direction. Random frames
and stochastic operator families have been studied for both theoretical
interest and applications in compressed sensing, randomized numerical
linear algebra, and stochastic signal analysis \cite{MR3958700,MR3263672,MR2964017,MR2500924,MR2236170,zbMATH06468266}.
Random models make it possible to describe stability “in expectation”
or “almost surely,” rather than deterministically. This approach is
also natural in quantum information, where measurements and channels
are often random.

Random iterative methods highlight the power of this idea. The Randomized
Kaczmarz algorithm solves large linear systems by repeatedly projecting
onto hyperplanes defined by randomly chosen equations. It has important
uses in tomography and medical imaging. Similarly, Stochastic Gradient
Descent (SGD), the main algorithm behind large-scale machine learning,
updates estimates using randomly selected samples. Both approaches
show that stable solutions can emerge from simple randomized steps. 

This paper develops an operator-theoretic version of that principle.
An iterative process is introduced based on a sequence of i.i.d. random
positive contractions, which generalizes the rank-one projections
used in the Kaczmarz method. The main theorem shows that this process
converges in mean square and almost surely, with the limit equal to
the identity operator. This gives a decomposition of any vector in
the space, and the operators in the process form a random operator-valued
frame. A key step in the analysis is an inequality for positive contractions
in Hilbert spaces. While this inequality has a short direct proof,
the version given here uses a dilation argument (Sz.-Nagy and Foiaș
\cite{MR275190}), which is standard in operator theory and highlights
the underlying structure.

In the special case where the random operators are projections, the
process yields a Parseval-type identity, so the family becomes a random
operator-valued Parseval frame. It is well known that the Kaczmarz
algorithm can be formulated entirely in operator-theoretic terms,
especially in the context of iterated products of projections; see,
e.g., \cite{MR2311862,MR2252935,MR2903120,MR3145756}, where the case
of two projections was first established by von Neumann \cite{MR34514}.
The situation becomes significantly more subtle when considering arbitrary
products of more than two projections, or of noncommuting bounded
operators more generally. Questions of convergence, stability, and
rate estimates depend heavily on geometric and spectral properties.
In a way, the present paper extends this line of work to a randomized
setting, where stability and convergence arise not from commutativity
but from probabilistic averaging.

\textbf{Novelty and contribution.} The main new idea is a residual-weighted
iteration that produces operator-valued frames with step-by-step guarantees.
It gives an exact telescoping identity at every iterate, a monotone
energy split between accumulated terms and a residual, geometric decay
of the residual in expectation under a simple mean-coercivity condition,
nonasymptotic “anytime” probability bounds, and almost-sure Parseval
limits. This goes beyond the direct Kaczmarz-like iteration, which
yields frames but attains a Parseval identity only for projections,
and beyond unweighted i.i.d. sums, which grow in expectation and offer
no pathwise decomposition. The results are constructive, require no
commutativity, and apply to general positive contractions, extending
projection-only statements with short, verifiable proofs.

\textbf{Organization.} \prettyref{sec:2} recalls the necessary background
on frames, fusion frames, and operator-valued frames, and introduces
the framework of random positive contractions on Hilbert spaces. \prettyref{sec:3}
presents the main convergence theorems for the random iterative scheme
built from i.i.d. positive contractions, including both mean-square
and almost-sure results together with the corresponding Parseval-type
identity in the projection case. \prettyref{sec:4} develops nonasymptotic
“anytime’’ estimates and quantitative refinements of the convergence
bounds, showing how the coercivity constant behaves under averaging
and thinning operations. 

\prettyref{sec:5} introduces the residual-weighted operator-valued
frame construction generated by i.i.d. random positive contractions,
proves the exact telescoping identity $S_{n}+R_{n}=I$ and geometric
residual decay governed by $\rho=\left\Vert I-\mathbb{E}\left[T_{1}\right]\right\Vert $,
derives explicit expected lower frame bounds $1-\rho^{n}$ and almost-sure
Parseval limits under a mean-coercivity hypothesis, records projection
and uniform-coercivity corollaries, and gives a direct comparison
with unweighted i.i.d. sums and with random products of contractions.

\prettyref{sec:6} presents a fully worked example based on rank-one
projections, showing concretely how the residual-weighted construction
operates on an arbitrary sequence of unit vectors. The section computes
all terms in the iteration explicitly, verifies the pathwise decomposition
of the identity, and demonstrates that the resulting vectors form
a tight frame under the same mean-coercivity assumptions used earlier.
It also includes a brief remark explaining that the construction extends
without modification to highly singular Hilbert spaces, such as those
arising from Cantor-type measures.

\section{Preliminaries}\label{sec:2}

This section collects the basic definitions and assumptions used throughout
the paper. The aim is to recall the necessary background on frames
and their generalizations, introduce the framework of operator\nobreakdash-valued
random variables, and highlight the class of operators that will play
a central role in the main results.

Let $H$ be a separable Hilbert space. The physics convention is adopted:
the inner product $\langle\cdot,\cdot\rangle$ is linear in the second
argument and conjugate linear in the first. Let $B\left(H,K\right)$
denote the space of all bounded operators from $H$ to $K$, and write
$B\left(H\right)$ when $H=K$. 

An operator $T\in B(H)$ is called a positive contraction if $0\leq T\leq I$,
that is, $T$ is selfadjoint, positive, and its operator norm satisfies
$\left\Vert T\right\Vert \leq1$. Positive contractions play a central
role in the analysis below, since the iterative process studied in
\prettyref{sec:3} will be built from i.i.d. random positive contractions.

\subsection*{Frames and generalizations}

The concept of a frame extends the notion of an orthonormal basis
by allowing redundancy while still permitting stable reconstruction.
This redundancy is particularly valuable in applications where robustness
against noise, erasures, or other errors is required. Over time, frames
have been generalized in several directions, two of the most important
being fusion frames and operator-valued frames.
\begin{defn}[Frame]
A sequence $\left\{ f_{i}\right\} _{i\in I}\subset H$ is called
a frame for $H$ if there exist constants $A,B>0$ such that
\[
A\left\Vert x\right\Vert ^{2}\leq\sum_{i\in I}\left|\left\langle f_{i},x\right\rangle \right|^{2}\leq B\left\Vert x\right\Vert ^{2},\quad x\in H.
\]
The constants $A$ and $B$ are called the frame bounds. If $A=B=1$,
it is called a Parseval frame. 
\end{defn}

Frames generalize orthonormal bases by allowing redundancy while preserving
stable reconstruction. A natural extension, motivated by distributed
sensing and parallel processing, is given by fusion frames. 

\begin{defn}[Fusion frame]
\label{def:2}A family $\left\{ (W_{i},v_{i})\right\} _{i\in I}$,
where $W_{i}\subset H$ are closed subspaces and $v_{i}>0$ are weights,
is called a fusion frame for $H$ if there exist constants $A,B>0$
such that
\[
A\left\Vert x\right\Vert ^{2}\leq\sum_{i\in I}v^{2}_{i}\left\Vert P_{W_{i}}x\right\Vert ^{2}\leq B\left\Vert x\right\Vert ^{2},\quad x\in H
\]
where $P_{W_{i}}$ denotes the orthogonal projection onto $W_{i}$. 
\end{defn}

Fusion frames move beyond individual vectors to weighted subspaces.
An even more general framework is provided by operator-valued frames,
which replace subspaces with bounded operators.
\begin{defn}[Operator-valued frame]
Let $\left\{ T_{i}\right\} _{i\in I}\subset B(H,K)$, where $K$
is another Hilbert space. The family $\{T_{i}\}_{i\in I}$ is called
an operator-valued frame (or g-frame) for $H$ if there exist constants
$A,B>0$ such that
\[
A\left\Vert x\right\Vert ^{2}\leq\sum_{i\in I}\left\Vert T_{i}x\right\Vert ^{2}\leq B\left\Vert x\right\Vert ^{2},\quad x\in H.
\]
If $A=B=1$, it is called a Parseval operator-valued frame. 

Operator\nobreakdash-valued frames generalize both classical and
fusion frames and provide a natural link between frame theory, operator
algebras, and dilation theory.
\end{defn}

\subsection*{Operator-valued random variables}

Let $\left(\Omega,\mathscr{F},\mathbb{P}\right)$ be a probability
space. An operator-valued random variable is a measurable map $\Psi\colon\Omega\rightarrow B\left(H\right)$.
Measurability is understood in the strong operator sense: for each
fixed $x\in H$, the mapping 
\[
\omega\mapsto\Psi\left(\omega\right)x
\]
is measurable as an $H$-valued function. This convention avoids the
difficulties arising from norm-measurability in infinite-dimensional
settings.

The distribution of the random operator $\Psi$ is given by the pushforward
measure 
\[
\mu_{\Psi}\coloneqq\mathbb{P}\circ\Psi^{-1},
\]
defined on the Borel $\sigma$-algebra of $B\left(H\right)$ associated
with the strong operator topology. 

To ensure the existence of expectations, a square-integrability condition
is imposed: for every $x\in H$, 
\[
\mathbb{E}\left\Vert \Psi x\right\Vert ^{2}\coloneqq\int_{\Omega}\left\Vert \Psi\left(\omega\right)x\right\Vert ^{2}d\mathbb{P}\left(\omega\right)<\infty.
\]
Thus, the operator 
\[
M=\int_{\Omega}\Psi^{*}\Psi\,d\mathbb{P}
\]
defined via
\[
\left\langle x,Mx\right\rangle =\mathbb{E}\left\Vert \Psi x\right\Vert ^{2}
\]
for all $x\in H$, is bounded, positive, and selfadjoint.

\section{Convergence and frame structure}\label{sec:3}

This section presents the main convergence results for the iterative
scheme built from i.i.d. random positive contractions. The analysis
begins with a fundamental estimate for positive contractions, which
forms the basis of the iterative argument.

The following lemma is a key ingredient for the main results, as it
provides the estimates needed in the iterative argument. Although
a short direct proof can be found in the literature (see e.g., \cite{MR2252935}
and related references), a dilation argument is included here instead,
as it is concise and serves as a standard tool in operator theory. 
\begin{lem}
\label{lem:2}Let $T$ be a positive contraction in a Hilbert space
$H$, that is, $0\leq T\leq I$. Then,
\begin{equation}
\left\Vert Tx\right\Vert ^{2}_{H}+\left\Vert x-Tx\right\Vert ^{2}_{H}\leq\left\Vert x\right\Vert ^{2}_{H},\quad x\in H.\label{eq:1}
\end{equation}
\end{lem}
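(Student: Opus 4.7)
The plan is to exhibit an explicit projection dilation of $T$ and then extract the inequality from the Pythagorean theorem in the enlarged space. Since $0\leq T\leq I$, the operators $T$ and $I-T$ are positive and their square roots exist and commute (both being continuous functions of $T$). On $K:=H\oplus H$ I would define
\[
P := \begin{pmatrix} T & T^{1/2}(I-T)^{1/2} \\ T^{1/2}(I-T)^{1/2} & I-T \end{pmatrix}.
\]
The first task is to check that $P$ is an orthogonal projection. Self-adjointness is immediate from $T^{*}=T$ and the commutation of $T^{1/2}$ with $(I-T)^{1/2}$. Idempotence follows from a block computation: the diagonal entries of $P^{2}$ reduce via $T^{2}+T(I-T)=T$ and $T(I-T)+(I-T)^{2}=I-T$, while the off-diagonal entries collapse to $T^{1/2}(I-T)^{1/2}$ using $T\cdot T^{1/2}(I-T)^{1/2}+T^{1/2}(I-T)^{1/2}\cdot(I-T)=T^{1/2}(I-T)^{1/2}$.

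Next I would identify $H$ with the first summand via the isometry $V\colon x\mapsto(x,0)$ and apply the Pythagorean identity $\|Vx\|^{2}=\|PVx\|^{2}+\|(I-P)Vx\|^{2}$, valid since $P$ is an orthogonal projection. A direct evaluation of the two summands gives
\[
\|PVx\|^{2} = \|Tx\|^{2}+\langle T(I-T)x,x\rangle, \qquad \|(I-P)Vx\|^{2}=\|x-Tx\|^{2}+\langle T(I-T)x,x\rangle,
\]
using $\|T^{1/2}(I-T)^{1/2}x\|^{2}=\langle T(I-T)x,x\rangle$. Adding these and noting $\|Vx\|^{2}=\|x\|^{2}$ yields
\[
\|Tx\|^{2}+\|x-Tx\|^{2} = \|x\|^{2}-2\langle T(I-T)x,x\rangle.
\]
Since $T$ and $I-T$ are commuting positive operators, $T(I-T)\geq 0$, so the correction term is nonnegative and the inequality \eqref{eq:1} drops out.

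The only non-routine step is setting up the dilation $P$ and verifying $P^{2}=P$; everything else is bookkeeping with block operators and the Pythagorean identity. I expect no real obstacle, although one must be careful to justify the commutation $T^{1/2}(I-T)^{1/2}=(I-T)^{1/2}T^{1/2}$, which is where the hypothesis $0\leq T\leq I$ (rather than merely $\|T\|\leq 1$) enters in an essential way.
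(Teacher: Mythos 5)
Your proof is correct, and it is built on the same underlying idea as the paper's proof --- dilate $T$ to a projection and use the Pythagorean identity --- but you execute it differently in a way that is worth noting. The paper invokes the abstract existence of a projection dilation $T=W^{*}PW$ (citing \cite{MR275190}) and obtains \eqref{eq:1} only as an inequality, the loss occurring when the contraction $W^{*}$ is applied to each of the two terms before Pythagoras is used in the dilation space. You instead construct the dilation explicitly as the $2\times2$ block projection on $H\oplus H$, which makes the argument self-contained (nothing is needed beyond the continuous functional calculus for $T$, and your block verifications of $P^{*}=P=P^{2}$ are all correct), and you evaluate the two Pythagorean summands exactly, arriving at the identity $\left\Vert Tx\right\Vert ^{2}+\left\Vert x-Tx\right\Vert ^{2}=\left\Vert x\right\Vert ^{2}-2\left\langle T(I-T)x,x\right\rangle$. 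This is strictly more information than \eqref{eq:1}: it identifies the defect as $2\left\langle T(I-T)x,x\right\rangle =2\Vert T^{1/2}(I-T)^{1/2}x\Vert ^{2}\geq0$, shows that equality holds precisely when $T(I-T)x=0$, and in particular recovers at a glance the fact (used in the paper's corollary on projections) that \eqref{eq:1} is an equality whenever $T$ is a selfadjoint projection. The one point you flag as delicate --- the commutation $T^{1/2}(I-T)^{1/2}=(I-T)^{1/2}T^{1/2}$ --- is indeed needed, but it is automatic since both square roots are norm limits of polynomials in $T$; the hypothesis $0\leq T\leq I$ is what guarantees the square roots exist in the first place.
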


\begin{proof}
Let $T=W^{*}PW$ be a dilation of $T$, where $W\colon H\rightarrow L$
is an isometric embedding of $H$ into some Hilbert space $L$, and
$P$ is a selfadjoint projection in $L$. Then, for all $x\in H$,
\begin{align*}
\text{l.h.s}_{\eqref{eq:1}} & =\left\Vert W^{*}PWx\right\Vert ^{2}_{H}+\left\Vert W^{*}\left(I_{L}-P\right)Wx\right\Vert ^{2}_{H}\\
 & \leq\left\Vert PWx\right\Vert ^{2}_{L}+\left\Vert \left(I_{L}-P\right)Wx\right\Vert ^{2}_{L}=\left\Vert Wx\right\Vert ^{2}_{L}=\left\Vert x\right\Vert ^{2}_{H}.
\end{align*}
\end{proof}
The next theorem applies the estimate from \prettyref{lem:2} to a
sequence i.i.d. random operators, taking values in positive contractions.
The result guarantees both mean-square convergence and almost sure
convergence of the associated iterative scheme. In addition, it identifies
the limiting operator identity and establishes frame bounds.
\begin{thm}
\label{thm:1}Let $\Psi\colon\Omega\rightarrow B\left(H\right)$ be
a random variable, taking values in positive contractions in $B\left(H\right)$.
Assume that there exists a constant $0<C<1$ such that 
\begin{align}
\mathbb{E}\left\Vert \Psi x\right\Vert ^{2} & \geq C\left\Vert x\right\Vert ^{2},\quad x\in H\label{eq:1-1}
\end{align}
which is equivalent to 
\[
\int_{\Omega}\Psi^{*}\Psi\,d\mathbb{P}\geq CI.
\]
Let $\left\{ \Psi_{i}\right\} _{i\in\mathbb{N}}$ be an i.i.d. sample
of $\Psi$. 
\begin{enumerate}
\item For all $x\in H$, 
\begin{equation}
\lim_{n\rightarrow\infty}\mathbb{E}\left\Vert x-\sum^{n}_{k=1}\Psi_{k}\left(I-\Psi_{k-1}\right)\cdots\left(I-\Psi_{1}\right)x\right\Vert ^{2}=0,\label{eq:a3}
\end{equation}
i.e., convergence in $L^{2}\left(\Omega,H\right)$.
\item The following operator identity holds: 
\begin{equation}
I=\sum^{\infty}_{k=1}\Psi_{k}\left(I-\Psi_{k-1}\right)\cdots\left(I-\Psi_{1}\right)\quad a.s.\label{eq:a4}
\end{equation}
Equivalently, for all $x\in H$,
\begin{equation}
x=\lim_{n\rightarrow\infty}\sum^{n}_{k=1}\Psi_{k}\left(I-\Psi_{k-1}\right)\cdots\left(I-\Psi_{1}\right)x\quad a.s.\label{eq:a5}
\end{equation}
\item For all $x\in H$, 
\begin{equation}
C\left\Vert x\right\Vert ^{2}\leq\lim_{n\rightarrow\infty}\mathbb{E}\left[\sum^{n}_{k=1}\left\Vert \Psi_{k}\left(I-\Psi_{k-1}\right)\cdots\left(I-\Psi_{1}\right)x\right\Vert ^{2}\right]\leq\left\Vert x\right\Vert ^{2}.\label{eq:a6}
\end{equation}
That is, the set 
\[
\left\{ \Psi_{n}\left(I-\Psi_{n-1}\right)\cdots\left(I-\Psi_{1}\right)\right\} _{n\in\mathbb{N}}
\]
forms a random operator-valued frame.
\end{enumerate}
\end{thm}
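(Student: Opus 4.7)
The plan rests on a telescoping identity. Define the residual operator $R_{0}\coloneqq I$ and $R_{n}\coloneqq\left(I-\Psi_{n}\right)\left(I-\Psi_{n-1}\right)\cdots\left(I-\Psi_{1}\right)$ for $n\ge1$; a one-line computation then gives
\[
\sum_{k=1}^{n}\Psi_{k}\left(I-\Psi_{k-1}\right)\cdots\left(I-\Psi_{1}\right)x=\sum_{k=1}^{n}\left(R_{k-1}x-R_{k}x\right)=x-R_{n}x,
\]
so each of the three conclusions reduces to a statement about $R_{n}x$. For part (1), I apply \prettyref{lem:2} to the contraction $\Psi_{n}$ and the vector $R_{n-1}x$ to obtain the pathwise bound $\left\Vert R_{n}x\right\Vert ^{2}\le\left\Vert R_{n-1}x\right\Vert ^{2}-\left\Vert \Psi_{n}R_{n-1}x\right\Vert ^{2}$. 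Conditioning on $\mathcal{F}_{n-1}\coloneqq\sigma\left(\Psi_{1},\ldots,\Psi_{n-1}\right)$ and using the independence of $\Psi_{n}$ from $\mathcal{F}_{n-1}$ together with the coercivity hypothesis \eqref{eq:1-1} gives $\mathbb{E}\bigl[\left\Vert \Psi_{n}R_{n-1}x\right\Vert ^{2}\bigm|\mathcal{F}_{n-1}\bigr]\ge C\left\Vert R_{n-1}x\right\Vert ^{2}$, hence $\mathbb{E}\left\Vert R_{n}x\right\Vert ^{2}\le(1-C)\,\mathbb{E}\left\Vert R_{n-1}x\right\Vert ^{2}$. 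Iterating yields the geometric estimate $\mathbb{E}\left\Vert R_{n}x\right\Vert ^{2}\le(1-C)^{n}\left\Vert x\right\Vert ^{2}\to0$, which is exactly \eqref{eq:a3}.

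For part (2), \prettyref{lem:2} also yields the pointwise monotonicity $\left\Vert R_{n}x\right\Vert \le\left\Vert R_{n-1}x\right\Vert$ almost surely, so on every sample path $\left\Vert R_{n}x\right\Vert ^{2}$ is non-increasing and has an almost sure limit. Combined with the geometric decay from part (1), which makes $\sum_{n}\mathbb{E}\left\Vert R_{n}x\right\Vert ^{2}$ finite, either the Borel--Cantelli lemma or monotone convergence forces that limit to be zero, and hence $R_{n}x\to0$ almost surely. Translating back through the telescoping identity $x-S_{n}x=R_{n}x$ then gives \eqref{eq:a4} and \eqref{eq:a5}.

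Part (3) follows from summing the same inequality from \prettyref{lem:2}: almost surely
\[
\sum_{k=1}^{n}\left\Vert \Psi_{k}R_{k-1}x\right\Vert ^{2}\le\sum_{k=1}^{n}\bigl(\left\Vert R_{k-1}x\right\Vert ^{2}-\left\Vert R_{k}x\right\Vert ^{2}\bigr)=\left\Vert x\right\Vert ^{2}-\left\Vert R_{n}x\right\Vert ^{2}\le\left\Vert x\right\Vert ^{2},
\]
and taking expectations delivers the upper frame bound in \eqref{eq:a6}. The lower bound is immediate: the expected partial sums are non-decreasing in $n$, and the $k=1$ term alone already contributes $\mathbb{E}\left\Vert \Psi_{1}x\right\Vert ^{2}\ge C\left\Vert x\right\Vert ^{2}$ by \eqref{eq:1-1}. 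The main obstacle I anticipate is essentially bookkeeping rather than content: one must verify that each composition $\Psi_{k}R_{k-1}$ is a strongly measurable $H$-valued random variable and that the independence of $\Psi_{n}$ from $\mathcal{F}_{n-1}$ is invoked at the correct step inside the conditional expectation. Both are routine in the strong-measurability framework set up in the Preliminaries.
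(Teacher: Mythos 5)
Your proposal is correct and follows essentially the same route as the paper: the telescoping identity $x - S_n x = R_n x$, the conditional-expectation contraction estimate $\mathbb{E}\|R_n x\|^2 \le (1-C)^n\|x\|^2$ via \prettyref{lem:2}, Borel--Cantelli for almost sure convergence, and the summed telescoping inequality for the frame bounds. The only (harmless) variation is your observation that pathwise monotonicity of $\|R_n x\|$ plus dominated convergence gives an alternative to the Chebyshev/Borel--Cantelli step the paper uses.
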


\begin{proof}
Let $\left\{ \Psi_{j}\right\} _{j\in\mathbb{N}}$ be an i.i.d. sample
of $\Psi$. Note that 
\begin{eqnarray*}
x & = & \Psi_{1}x+\left(I-\Psi_{1}\right)x\\
 & = & \Psi_{1}x+\Psi_{2}\left(I-\Psi_{1}\right)x+\left(I-\Psi_{2}\right)\left(I-\Psi_{1}\right)x\\
 & \vdots\\
 & = & \sum^{n}_{k=1}\Psi_{k}\left(I-\Psi_{k-1}\right)\cdots\left(I-\Psi_{1}\right)x+R_{n}x
\end{eqnarray*}
where 
\begin{align*}
R_{n} & \coloneqq\begin{cases}
\left(I-\Psi_{n}\right)\cdots\left(I-\Psi_{1}\right) & n\geq1\\
I & n=0
\end{cases}
\end{align*}

Let $\mathbb{E}_{n-1}\left[\cdot\mid\Psi_{1},\dots,\Psi_{n-1}\right]$
denote the conditional expectation given $\Psi_{j}$, $j=1,\dots n-1$.
Since 
\[
R_{n}x=R_{n-1}x-\Psi_{n}R_{n-1}x
\]
it follows that 
\[
\mathbb{E}_{n-1}\left\Vert R_{n}x\right\Vert ^{2}=\mathbb{E}_{n-1}\left\Vert R_{n-1}x-\Psi_{n}R_{n-1}x\right\Vert ^{2}.
\]
An application of \prettyref{lem:2} to $\Psi_{n}$ gives 
\begin{align*}
\mathbb{E}_{n-1}\left\Vert R_{n}x\right\Vert ^{2} & \leq\mathbb{E}_{n-1}\left(\left\Vert R_{n-1}x\right\Vert ^{2}-\left\Vert \Psi_{n}R_{n-1}x\right\Vert ^{2}\right)\\
 & =\left\Vert R_{n-1}x\right\Vert ^{2}-\mathbb{E}_{n-1}\left\Vert \Psi_{n}R_{n-1}x\right\Vert ^{2}\\
 & \leq\left\Vert R_{n-1}x\right\Vert ^{2}-C\left\Vert R_{n-1}x\right\Vert ^{2}\quad\text{by assumption \eqref{eq:1-1}}\\
 & =\left\Vert R_{n-1}x\right\Vert ^{2}\left(1-C\right).
\end{align*}
By induction, 
\begin{align}
\mathbb{E}\left\Vert R_{n}x\right\Vert ^{2} & =\mathbb{E}\left[\mathbb{E}_{n-1}\left\Vert R_{n}x\right\Vert ^{2}\right]\nonumber \\
 & \leq\mathbb{E}\left\Vert R_{n-1}x\right\Vert ^{2}\left(1-C\right)\leq\left\Vert x\right\Vert ^{2}\left(1-C\right)^{n}\label{eq:a7}
\end{align}
which converges to 0 as $n\rightarrow\infty$. Therefore, \eqref{eq:a3}
follows: 
\[
\lim_{n\rightarrow\infty}\mathbb{E}\left\Vert x-\sum^{n}_{k=1}\Psi_{k}\left(I-\Psi_{k-1}\right)\cdots\left(I-\Psi_{1}\right)x\right\Vert ^{2}=\lim_{n\rightarrow\infty}\mathbb{E}\left\Vert R_{n}x\right\Vert ^{2}=0.
\]

On the other hand, for any $\delta>0$, 
\[
P\left(\left\Vert R_{n}x\right\Vert >\delta\right)\leq\frac{\mathbb{E}\left\Vert R_{n}x\right\Vert ^{2}}{\delta^{2}}\leq\frac{\left\Vert x\right\Vert ^{2}}{\delta^{2}}\left(1-C\right)^{n}\;\text{by \eqref{eq:a7},}
\]
so that 
\[
\sum^{\infty}_{n=0}P\left(\left\Vert R_{n}x\right\Vert >\delta\right)\leq\frac{\left\Vert x\right\Vert ^{2}}{\delta^{2}}\frac{1}{C}<\infty.
\]
By the Borel-Cantelli lemma, $\left\Vert R_{n}x\right\Vert \rightarrow0$
almost surely. The assertions \eqref{eq:a4}--\eqref{eq:a5} follow
from this. 

Finally, to prove \eqref{eq:a6}, recall that $\Psi$ takes values
in positive contractions. Applying \eqref{lem:2} repeatedly gives
\begin{eqnarray}
\left\Vert x\right\Vert ^{2} & \geq & \left\Vert \Psi_{1}x\right\Vert ^{2}+\left\Vert \left(I-\Psi_{1}\right)x\right\Vert ^{2}\nonumber \\
 & \geq & \left\Vert \Psi_{1}x\right\Vert ^{2}+\left\Vert \Psi_{2}\left(I-\Psi_{1}\right)x\right\Vert ^{2}+\left\Vert \left(I-\Psi_{2}\right)\left(I-\Psi_{1}\right)x\right\Vert ^{2}\nonumber \\
 & \vdots\nonumber \\
 & \geq & \sum^{n}_{k=1}\left\Vert \Psi_{k}\left(I-\Psi_{k-1}\right)\cdots\left(I-\Psi_{1}\right)x\right\Vert ^{2}+\left\Vert R_{n}x\right\Vert ^{2}\label{eq:a8}\\
 & \geq & \sum^{n}_{k=1}\left\Vert \Psi_{k}\left(I-\Psi_{k-1}\right)\cdots\left(I-\Psi_{1}\right)x\right\Vert ^{2}\nonumber 
\end{eqnarray}
Taking expectation and using \eqref{eq:1-1} on $\Psi_{1}$: 
\[
\left\Vert x\right\Vert ^{2}\geq\mathbb{E}\left[\sum^{n}_{k=1}\left\Vert \Psi_{k}\left(I-\Psi_{k-1}\right)\cdots\left(I-\Psi_{1}\right)x\right\Vert ^{2}\right]\geq\underset{\text{by }\eqref{eq:1-1}}{\underbrace{\mathbb{\mathbb{E}}\left\Vert \Psi_{1}x\right\Vert ^{2}\geq C\left\Vert x\right\Vert ^{2}}},\quad\forall n.
\]
Letting $n\rightarrow\infty$ gives \eqref{eq:a6}.
\end{proof}
The general result specializes in a clean way when the random operators
are selfadjoint projections. In this case, the iterative scheme produces
a random operator-valued Parseval frame. 
\begin{cor}
Suppose $\Psi\colon\Omega\rightarrow B\left(H\right)$ takes values
in selfadjoint projections in $B\left(H\right)$. Then the identities
\prettyref{eq:a3}--\prettyref{eq:a5} still hold true. Moreover,
for all $x\in H$, 
\[
\lim_{n\rightarrow\infty}\mathbb{E}\left[\sum^{n}_{k=1}\left\Vert \Psi_{k}\left(I-\Psi_{k-1}\right)\cdots\left(I-\Psi_{1}\right)x\right\Vert ^{2}\right]=\left\Vert x\right\Vert ^{2}.
\]
That is, the set 
\[
\left\{ \Psi_{n}\left(I-\Psi_{n-1}\right)\cdots\left(I-\Psi_{1}\right)\right\} _{n\in\mathbb{N}}
\]
forms a random operator-valued Parseval frame.
\end{cor}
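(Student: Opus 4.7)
The plan is to specialize the argument of \prettyref{thm:1}, tracking where the inequality in \prettyref{lem:2} becomes an equality. Since every selfadjoint projection is a positive contraction, \prettyref{thm:1} applies verbatim under the standing coercivity assumption \eqref{eq:1-1}, so the convergence statements \eqref{eq:a3}--\eqref{eq:a5} are inherited with no extra work. The only new content is the Parseval identity, which will come from sharpening the lower bound in \eqref{eq:a8}.

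First I would record the key algebraic fact: if $P=P^{*}=P^{2}$, then $P(I-P)=0$ and $P+(I-P)=I$, so expanding $\|x\|^{2}=\langle(P+(I-P))x,(P+(I-P))x\rangle$ the cross terms vanish and one obtains the Pythagorean equality
\[
\|Px\|^{2}+\|(I-P)x\|^{2}=\|x\|^{2}.
\]
Equivalently, the dilation argument in \prettyref{lem:2} becomes an equality because the isometric embedding $W$ can be taken as the identity and $P$ is already a projection on $H$. Thus \eqref{eq:1} holds with equality for each realization $\Psi_{k}(\omega)$.

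Next I would revisit the telescoping chain \eqref{eq:a8}, replacing every $\geq$ by $=$ and thereby obtaining, for every $n$ and every $\omega$,
\[
\|x\|^{2}=\sum^{n}_{k=1}\bigl\|\Psi_{k}(I-\Psi_{k-1})\cdots(I-\Psi_{1})x\bigr\|^{2}+\|R_{n}x\|^{2}.
\]
Taking expectation yields
\[
\|x\|^{2}=\mathbb{E}\Biggl[\sum^{n}_{k=1}\bigl\|\Psi_{k}(I-\Psi_{k-1})\cdots(I-\Psi_{1})x\bigr\|^{2}\Biggr]+\mathbb{E}\|R_{n}x\|^{2}.
\]

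Finally, invoking the exponential decay estimate \eqref{eq:a7}, namely $\mathbb{E}\|R_{n}x\|^{2}\leq(1-C)^{n}\|x\|^{2}\to0$, I would let $n\to\infty$ to conclude the Parseval identity. There is no real obstacle here; the only point requiring care is verifying that \prettyref{lem:2} is in fact an equality for projections so that the inequality in \eqref{eq:a8} can be tightened, which the short Pythagorean computation above settles.
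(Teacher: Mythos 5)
Your proposal is correct and follows essentially the same route as the paper: observe that \prettyref{lem:2} becomes the Pythagorean equality for projections, upgrade the chain \eqref{eq:a8} to an equality, take expectations, and let $n\to\infty$ using $\mathbb{E}\left\Vert R_{n}x\right\Vert ^{2}\to0$ from \eqref{eq:a7}. The only (welcome) addition is your explicit verification that the cross terms vanish for a selfadjoint projection, which the paper leaves implicit.
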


\begin{proof}
For projections, the inequality \eqref{eq:1} is replaced by an equality.
Similarly, \eqref{eq:a8} becomes 
\[
\left\Vert x\right\Vert ^{2}=\sum^{n}_{k=1}\left\Vert \Psi_{k}\left(I-\Psi_{k-1}\right)\cdots\left(I-\Psi_{1}\right)x\right\Vert ^{2}+\left\Vert R_{n}x\right\Vert ^{2}
\]
and so 
\[
\left\Vert x\right\Vert ^{2}=\lim_{n\rightarrow\infty}\mathbb{E}\left[\sum^{n}_{k=1}\left\Vert \Psi_{k}\left(I-\Psi_{k-1}\right)\cdots\left(I-\Psi_{1}\right)x\right\Vert ^{2}\right]+\underset{=0}{\underbrace{\lim_{n\rightarrow\infty}\mathbb{E}\left\Vert R_{n}x\right\Vert ^{2}}}
\]
\end{proof}
As an application, it follows that every fusion frame naturally induces
a random operator-valued Parseval frame. The construction uses the
fusion frame subspaces as the possible outcomes of a discrete random
operator, with the fusion weights providing the probability distribution.
\begin{cor}
\label{cor:7}Let $\left\{ (W_{i},v_{i})\right\} _{i\in\mathbb{N}}$
be a fusion frame (see \prettyref{def:2}), so that 
\[
A\left\Vert x\right\Vert ^{2}\leq\sum_{i\in\mathbb{N}}v^{2}_{i}\left\Vert P_{W_{i}}x\right\Vert ^{2}\leq B\left\Vert x\right\Vert ^{2},\quad x\in H
\]
where $P_{W_{i}}$ denotes the orthogonal projection onto $W_{i}$. 

Assume further that the weights $\left\{ v_{i}\right\} $ form a probability
distribution, i.e., $\sum v^{2}_{i}=1$. Let $\Psi\colon\Omega\rightarrow B\left(H\right)$
be a discrete random operator, taking values in $\left\{ P_{W_{i}}\right\} $,
such that 
\[
\mathbb{P}\left(\Psi=P_{W_{i}}\right)=v^{2}_{i}
\]
Let $\left\{ \Psi_{i}\right\} $ be an i.i.d. sample of $\Psi$, and
set 
\[
T_{k}\coloneqq\Psi_{k}\left(I-\Psi_{k-1}\right)\cdots\left(I-\Psi_{1}\right),\quad k\in\mathbb{N}.
\]
Then, for all $x\in H$, 
\[
x=\lim_{n\rightarrow\infty}\sum^{n}_{k=1}T_{k}x\quad a.s.
\]
Moreover, 
\[
\lim_{n\rightarrow\infty}\mathbb{E}\left[\sum^{n}_{k=1}\left\Vert T_{k}x\right\Vert ^{2}\right]=\left\Vert x\right\Vert ^{2},\quad x\in H.
\]
In other words, the family $\left\{ T_{k}\right\} _{k\in\mathbb{N}}$
forms a random operator-valued Parseval frame. 
\end{cor}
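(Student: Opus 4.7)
The plan is to recognize that this corollary is a direct specialization of the preceding one: the task reduces to checking that the discrete random operator $\Psi$ takes values in selfadjoint projections and satisfies the coercivity hypothesis \eqref{eq:1-1} of \prettyref{thm:1}, after which the projection corollary above immediately delivers all the claimed conclusions.

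First I would observe that $\Psi$ is a measurable map into $B(H)$ in the strong operator sense, being a discrete random variable with countably many values, and that each value $P_{W_i}$ is an orthogonal (hence selfadjoint) projection by the definition of a fusion frame. Thus $\Psi$ takes values in selfadjoint projections and in particular in positive contractions, so the structural hypothesis of the projection corollary is met.

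Next I would unpack $\mathbb{E}\|\Psi x\|^2$ using the explicit distribution of $\Psi$: for every $x \in H$,
\[
\mathbb{E}\|\Psi x\|^2 \;=\; \sum_{i \in \mathbb{N}} \mathbb{P}(\Psi = P_{W_i})\,\|P_{W_i} x\|^2 \;=\; \sum_{i \in \mathbb{N}} v_i^2\,\|P_{W_i} x\|^2,
\]
so the fusion frame inequality from \prettyref{def:2} is literally
\[
A\,\|x\|^2 \;\leq\; \mathbb{E}\|\Psi x\|^2 \;\leq\; B\,\|x\|^2.
\]
In particular, the coercivity hypothesis \eqref{eq:1-1} holds with constant $C = A > 0$. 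Note that $\sum v_i^2 = 1$ together with $\|P_{W_i} x\| \leq \|x\|$ forces $B \leq 1$ and hence $A \leq 1$, so $C$ can be chosen strictly inside $(0,1)$ (should $A = 1$, any smaller positive constant still satisfies \eqref{eq:1-1}).

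With these two verifications in hand, the preceding projection corollary applied to $\Psi$ immediately yields both the almost sure reconstruction $x = \lim_n \sum_{k=1}^n T_k x$ and the Parseval-type identity $\lim_n \mathbb{E}\bigl[\sum_{k=1}^n \|T_k x\|^2\bigr] = \|x\|^2$, and therefore the Parseval property of $\{T_k\}_{k \in \mathbb{N}}$. There is essentially no obstacle here: the only real content is the one-line identification of the middle term of the fusion frame inequality with the second moment $\mathbb{E}\|\Psi x\|^2$ of the associated random projection, which is precisely what makes fusion frames fit seamlessly into the framework of \prettyref{thm:1}.
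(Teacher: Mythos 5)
Your proof is correct and follows essentially the same route as the paper: identify $\mathbb{E}\Vert\Psi x\Vert^{2}=\sum v_{i}^{2}\Vert P_{W_{i}}x\Vert^{2}$ with the middle term of the fusion frame inequality, take $C=A$ for the coercivity hypothesis, and invoke the projection case of \prettyref{thm:1}. You are in fact slightly more careful than the paper on two points worth keeping: you correctly route the Parseval identity through the projection corollary rather than \prettyref{thm:1} alone, and you address the requirement $0<C<1$ in \prettyref{thm:1} (handling the edge case $A=1$), which the paper's proof passes over silently.
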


\begin{proof}
Since $\Psi$ is supported on projections and the weights $\left\{ v_{i}\right\} $
define a probability distribution, one has 
\[
\mathbb{E}\left\Vert \Psi x\right\Vert ^{2}=\sum v^{2}_{i}\left\Vert P_{W_{i}}x\right\Vert ^{2},\quad x\in H.
\]
By the fusion frame assumption, this expectation satisfies
\[
\mathbb{E}\left\Vert \Psi x\right\Vert ^{2}\geq A\left\Vert x\right\Vert ^{2},\quad x\in H,
\]
so the coercivity condition \prettyref{eq:1-1} holds with the constant
$C$ equal to the lower fusion frame bound $A$. Applying \prettyref{thm:1}
then yields the desired conclusions.
\end{proof}
\begin{rem}
The operator-valued decomposition in \prettyref{cor:7} provides a
randomized refinement of the original fusion frame expansion. In the
standard deterministic setting, a fusion frame $\left\{ (W_{i},v_{i})\right\} _{i\in\mathbb{N}}$
allows for the reconstruction of any vector $x\in H$ via a global
synthesis operator involving the projections $P_{W_{i}}$. The construction
above instead realizes this reconstruction through an adaptive, path-dependent
sequence of operator applications: at each step, a projection $P_{W_{i}}$
is selected randomly (according to its weight $v^{2}_{i}$), and the
residual vector is updated accordingly.

The limiting behavior of this scheme is expressed through two operator
identities: 
\end{rem}

\begin{enumerate}
\item $\mathbb{E}\left[\sum\nolimits^{\infty}_{k=1}T^{*}_{k}T_{k}\right]=I$,
in expectation; 
\item $I=\sum^{\infty}_{k=1}T_{k}$, a.s. in the strong operator topology.
\end{enumerate}
Together, these relations yield a Parseval-type construction: the
identity operator is recovered both in expectation and almost surely.
Thus, the original fusion frame gives rise to a random operator-valued
Parseval frame, where the frame elements emerge dynamically from the
underlying probability distribution. In this sense, \prettyref{thm:1}
may be viewed as a natural generalization of \cite{MR4126821}.

\section{Nonasymptotic bounds and stability}\label{sec:4}

Throughout this section, $\left(\Psi_{n}\right)_{n\geq1}$ are i.i.d.
random positive contractions on $H$. Assume 
\begin{equation}
\mathbb{E}\left\Vert \Psi x\right\Vert ^{2}\geq C\left\Vert x\right\Vert ^{2},\qquad x\in H\label{eq:4-0}
\end{equation}
with $0<C<1$ holds. Equivalently, $\int\Psi^{*}\Psi d\mathbb{P}\geq CI$,
see \eqref{eq:1-1}. For $n\geq1$, let 
\[
R_{n}:=\left(I-\Psi_{n}\right)\left(I-\Psi_{n-1}\right)\cdots\left(I-\Psi_{1}\right),\qquad R_{0}:=I,
\]
and recall the telescoping expansion from \prettyref{sec:3}. In particular,
the conditional expectation step and \prettyref{lem:2} yield the
key inequality:
\begin{equation}
\mathbb{E}\left[\left\Vert R_{n}x\right\Vert ^{2}\mid\Psi_{1},\dots,\Psi_{n-1}\right]\leq\left(1-C\right)\left\Vert R_{n-1}x\right\Vert ^{2},\qquad x\in H,\:n\geq1.\label{eq:4-1}
\end{equation}
See the displayed chain in the proof of \prettyref{thm:1} leading
to \eqref{eq:a7}. 

\subsection{A supermartingale and anytime tail bounds}

For a fixed $x\in H$, define the nonnegative process 
\[
M_{n}\left(x\right):=\frac{\left\Vert R_{n}x\right\Vert ^{2}}{\left(1-C\right)^{n}},\qquad n\geq1
\]
with $M_{0}=\left\Vert x\right\Vert ^{2}$. 
\begin{thm}
\label{thm:9}Under the above assumptions $\left\{ M_{n}\left(x\right)\right\} _{n\geq1}$
is a nonnegative supermartingale with respect to the filtration $\mathcal{F}_{n}:=\sigma\left(\Psi_{1},\dots,\Psi_{n}\right)$.
Thus, for every $u>0$ and every $n\geq0$, 
\begin{equation}
\mathbb{P}\left(\sup_{k\geq n}\left\Vert R_{k}x\right\Vert ^{2}\geq u\left(1-C\right)^{n}\left\Vert x\right\Vert ^{2}\right)\leq\frac{1}{u}.\label{eq:4-2}
\end{equation}
Equivalently, for any $\epsilon>0$, 
\begin{equation}
\mathbb{P}\left(\sup_{k\geq n}\left\Vert R_{k}x\right\Vert \geq\epsilon\left(1-C\right)^{n/2}\left\Vert x\right\Vert \right)\leq\epsilon^{2}.\label{eq:4-3}
\end{equation}
\end{thm}

\begin{proof}
The conditional expectation \eqref{eq:4-1} gives
\[
\mathbb{E}\left[M_{n}\left(x\right)\mid\mathcal{F}_{n-1}\right]=\frac{\mathbb{E}\left[\left\Vert R_{n}x\right\Vert ^{2}\mid\mathcal{F}_{n-1}\right]}{\left(1-C\right)^{n}}\leq\frac{\left(1-C\right)\left\Vert R_{n-1}x\right\Vert ^{2}}{\left(1-C\right)^{n}}=M_{n-1}\left(x\right),
\]
so $\left\{ M_{n}\left(x\right)\right\} $ is a nonnegative supermartingale.

For $a>0$, Ville's inequality for nonnegative supermartingales states
\[
\mathbb{P}\left(\sup_{k\geq n}M_{k}\left(x\right)\geq a\right)\leq\frac{M_{0}\left(x\right)}{a}=\frac{\left\Vert x\right\Vert ^{2}}{a}.
\]
(See, e.g., any standard martingale reference.)

Setting $a=u\left\Vert x\right\Vert ^{2}$ yields 
\[
\mathbb{P}\left(\sup_{k\geq n}\frac{\left\Vert R_{k}x\right\Vert ^{2}}{\left(1-C\right)^{k}}\geq u\left\Vert x\right\Vert ^{2}\right)\leq\frac{1}{u}.
\]
Since $\left(1-C\right)^{k}\leq\left(1-C\right)^{n}$ for all $k\geq n$,
we have 
\[
\left\{ \sup_{k\geq n}\left\Vert R_{k}x\right\Vert ^{2}\geq u\left(1-C\right)^{n}\left\Vert x\right\Vert ^{2}\right\} \subset\left\{ \sup_{k\geq n}\frac{\left\Vert R_{k}x\right\Vert ^{2}}{\left(1-C\right)^{k}}\geq u\left\Vert x\right\Vert ^{2}\right\} ,
\]
and \eqref{eq:4-2} follows. Choosing $u=\epsilon^{-2}$ gives \eqref{eq:4-3}.
\end{proof}
\begin{rem}
Inequality \eqref{eq:4-3} is an ``anytime'' guarantee. It means
with probability at least $1-\epsilon^{2}$, no future iterate $R_{k}x$
(for any $k\geq n$) ever exceeds $\epsilon\left(1-C\right)^{n/2}\left\Vert x\right\Vert $.
This strengthens the fixed $n$ Markov-type control used after \eqref{eq:a7}
(there it was applied pointwise in $n$). 
\end{rem}

\subsection{Averaging and thinning}

We next quantify how averaging several i.i.d. copies of $\Psi$ in
each step affects the coercivity condition \eqref{eq:4-0}, and thus
the rate. Let $m\in\mathbb{N}$, and at each iteration replace $\Psi$
by the average 
\[
\overline{\Psi}^{\left(m\right)}:=\frac{1}{m}\sum^{m}_{j=1}\Psi_{j}
\]
where $\Psi_{j}$'s are i.i.d. copies of $\Psi$, independent across
steps. Define the new coercivity constant $C_{m}$ by 
\[
\mathbb{E}\left[\Vert\overline{\Psi}^{\left(m\right)}x\Vert^{2}\right]\geq C_{m}\left\Vert x\right\Vert ^{2},\qquad x\in H.
\]
We first identify a quantitative lower bound on $C_{m}$ in terms
of $C$ and the spectrum of $\mathbb{E}\left[\Psi\right]$. 
\begin{prop}
For every $m\geq1$, 
\begin{equation}
C_{m}\geq\frac{C}{m}+\frac{m-1}{m}\lambda^{2}_{min}\left(\mathbb{E}\left[\Psi\right]\right).\label{eq:4-5}
\end{equation}
In particular, $C_{m}\geq C/m$, and if $\mathbb{E}\left[\Psi\right]$
has spectral gap $\lambda_{min}\left(\mathbb{E}\left[\Psi\right]\right)>0$,
then $C_{m}$ increases to $\lambda^{2}_{min}\left(\mathbb{E}\left[\Psi\right]\right)$
as $m\rightarrow\infty$.
\end{prop}

\begin{proof}
Fix $x\in H$. By the i.i.d assumption, 
\[
\mathbb{E}\Vert\overline{\Psi}^{\left(m\right)}x\Vert^{2}=\frac{1}{m^{2}}\sum^{m}_{j=1}\mathbb{E}\left\Vert \Psi_{j}x\right\Vert ^{2}+\frac{1}{m^{2}}\sum_{j\neq\ell}\mathbb{E}\left\langle \Psi_{j}x,\Psi_{\ell}x\right\rangle .
\]
For the diagonal terms, 
\[
\mathbb{E}\left\Vert \Psi_{j}x\right\Vert ^{2}=\mathbb{E}\left\Vert \Psi x\right\Vert ^{2}\geq C\left\Vert x\right\Vert ^{2}
\]
by \eqref{eq:1-1}. For the off-diagonal terms, independence implies
\[
\mathbb{E}\left\langle \Psi_{j}x,\Psi_{\ell}x\right\rangle =\left\langle \mathbb{E}\left[\Psi_{j}x\right],\mathbb{E}\left[\Psi_{\ell}x\right]\right\rangle =\left\Vert \mathbb{E}\left[\Psi\right]x\right\Vert ^{2}.
\]
Hence 
\begin{align*}
\mathbb{E}\Vert\overline{\Psi}^{\left(m\right)}x\Vert^{2} & =\frac{1}{m}\mathbb{E}\left\Vert \Psi x\right\Vert ^{2}+\frac{m-1}{m}\left\Vert \mathbb{E}\left[\Psi\right]x\right\Vert ^{2}\\
 & \geq\left(\frac{C}{m}+\frac{m-1}{m}\lambda^{2}_{min}\left(\mathbb{E}\left[\Psi\right]\right)\right)\left\Vert x\right\Vert ^{2}.
\end{align*}
Here, we used the fact that $\mathbb{E}\left[\Psi\right]$ is a positive
contraction and thus 
\[
\left\Vert \mathbb{E}\left[\Psi\right]x\right\Vert ^{2}=\left\langle x,\left(\mathbb{E}\left[\Psi\right]\right)^{2}x\right\rangle \geq\lambda_{min}\left(\left(\mathbb{E}\left[\Psi\right]\right)^{2}\right)\left\Vert x\right\Vert ^{2}=\lambda^{2}_{min}\left(\mathbb{E}\left[\Psi\right]\right)\left\Vert x\right\Vert ^{2}.
\]
This proves \eqref{eq:4-5}.
\end{proof}
\begin{cor}
Let $R^{\left(m\right)}_{n}$ be the residuals generated by replacing
$\Psi$ with $\overline{\Psi}^{\left(m\right)}$ at each step. Then,
for every $x\in H$ and every $n\geq0$, 
\[
\mathbb{E}\left[\Vert R^{\left(m\right)}_{n}x\Vert^{2}\right]\leq\left(1-C_{m}\right)^{n}\left\Vert x\right\Vert ^{2},
\]
and the anytime bounds \eqref{eq:4-2}-\eqref{eq:4-3} hold with $C$
replaced by $C_{m}$.
\end{cor}

\begin{proof}
Note that the proof of \prettyref{thm:1} uses only \prettyref{lem:2}
and the one-step conditional estimate like \eqref{eq:4-1} with the
relevant coercivity constant. Applying that argument to $\overline{\Psi}^{\left(m\right)}$
yields 
\[
\mathbb{E}\left[\Vert R^{\left(m\right)}_{n}x\Vert^{2}\right]\leq\left(1-C_{m}\right)^{n}\left\Vert x\right\Vert ^{2}.
\]
The supermartingale construction in \prettyref{thm:9} then applies
verbatim with $C$ replaced with $C_{m}$. 
\end{proof}
\begin{rem}
If at each step we skip with probability $1-p$ and otherwise apply
$\Psi$ (i.e., replace $\Psi$ by $\tilde{\Psi}=\xi\Psi$, where $\xi\sim\text{Bernoulli}\left(p\right)$,
independent of $\Psi$), then 
\[
\mathbb{E}\left[\Vert\tilde{\Psi}x\Vert^{2}\right]=p\mathbb{E}\left[\left\Vert \Psi x\right\Vert ^{2}\right]\geq pC\left\Vert x\right\Vert ^{2},
\]
so the new coercivity is $C'=pC$. All statements above hold with
$C$ replaced with $pC$. 
\end{rem}

\section{Residual-weighted random operator-valued frames}\label{sec:5}

The analysis in \prettyref{sec:3} produces a random operator-valued
frame that is only Parseval in the restrictive case of projections.
This section gives a more powerful, residual-weighted construction
to overcome this limitation. We introduce a new, adaptive scheme built
upon an exact pathwise operator identity. The main result (\prettyref{thm:6-2})
generates an almost sure Parseval operator-valued frame from the entire
class of i.i.d. random positive contractions, subject to a mean coercivity
condition.

Fix an i.i.d. sequence $\left(T_{k}\right)_{k\geq1}$ of random positive
contractions on $H$, i.e., $0\leq T_{k}\leq I$ almost surely and
the family is independent and identically distributed. Define the
residual process $\left(R_{k}\right)_{k\geq0}$ by
\begin{equation}
R_{0}:=I,\qquad R_{k}=R^{1/2}_{k-1}\left(I-T_{k}\right)R^{1/2}_{k-1},\label{eq:f-1}
\end{equation}
and the analysis operators $\left(W_{k}\right)_{k\geq1}$,
\begin{equation}
W_{k}:=T^{1/2}_{k}R^{1/2}_{k-1}.\label{eq:f-2}
\end{equation}
These are well defined almost surely, $0\leq R_{k}\leq I$ for all
$k$, and $\left\{ W_{k}\right\} $ is uniformly bounded by $1$ in
operator norm.

The starting point is an exact energy identity that holds pathwise,
with no commutativity assumptions.

\begin{lem}
\label{lem:6-1}For every $n\geq1$, almost surely, 
\begin{equation}
\sum^{n}_{k=1}W^{*}_{k}W_{k}+R_{n}=I.\label{eq:f-3}
\end{equation}
Equivalently, for every $x\in H$, 
\begin{equation}
\sum^{n}_{k=1}\left\Vert W_{k}x\right\Vert ^{2}+\Vert R^{1/2}_{n}x\Vert^{2}=\left\Vert x\right\Vert ^{2}.\label{eq:f-4}
\end{equation}
\end{lem}

\begin{proof}
\noindent For each $k\geq1$, we compute (by the definition of $W_{k}$
in \eqref{eq:f-2})
\[
W^{*}_{k}W_{k}=R^{1/2}_{k-1}T_{k}R^{1/2}_{k-1}.
\]
Hence, using \eqref{eq:f-1}, 
\[
W^{*}_{k}W_{k}+R_{k}=R^{1/2}_{k-1}\left(T_{k}+I-T_{k}\right)R^{1/2}_{k-1}=R_{k-1}.
\]
Summing the telescoping identity over $k=1,\ldots,n$ gives 
\[
\sum^{n}_{k=1}W^{*}_{k}W_{k}+R_{n}=R_{0}=I,
\]
which is \eqref{eq:f-3}. The vector identity \eqref{eq:f-4} follows
by applying both sides to $x$ and taking inner products with $x$.
\end{proof}
The lemma shows that every realization produces a finite Parseval-type
decomposition with a nonnegative remainder $R_{n}$. Passing to the
limit $n\to\infty$ converts this into a random Parseval operator-valued
frame provided the residuals vanish strongly.

To quantify this vanishing and to exhibit a clean comparison principle,
we introduce the random increments 
\[
X_{k}\left(x\right):=\Vert R^{1/2}_{k}x\Vert^{2}=\left\langle x,R_{k}x\right\rangle 
\]
with $k\geq0$, $x\in H$. 

Let $\mathcal{F}_{k}:=\sigma\left(T_{1},\ldots,T_{k}\right)$. Conditional
expectation and independence give, for each fixed $x$, 
\begin{align*}
\mathbb{E}\left[X_{k}\left(x\right)\mid\mathcal{F}_{k-1}\right] & =\mathbb{E}\left[\left\langle x,R^{1/2}_{k-1}\left(I-T_{k}\right)R^{1/2}_{k-1}x\right\rangle \mid\mathcal{F}_{k-1}\right]\\
 & =\left\langle x,R^{1/2}_{k-1}\left(I-\mathbb{E}\left[T_{1}\right]\right)R^{1/2}_{k-1}x\right\rangle .
\end{align*}
Write $M:=I-\mathbb{E}\left[T_{1}\right]$, so $0\leq M\leq I$. Using
the inequality 
\[
\left\langle x,R^{1/2}_{k-1}MR^{1/2}_{k-1}x\right\rangle \leq\left\Vert M\right\Vert \left\langle x,R_{k-1}x\right\rangle 
\]
we get 
\begin{equation}
\mathbb{E}\left[X_{k}\left(x\right)\mid\mathcal{F}_{k-1}\right]\leq\rho X_{k-1}\left(x\right)\label{eq:f-5}
\end{equation}
where 
\[
\rho:=\left\Vert M\right\Vert =\left\Vert I-\mathbb{E}\left[T_{1}\right]\right\Vert \in\left[0,1\right].
\]
This is the key residual-weighted contraction in conditional expectation.
It will drive both geometric mean bounds and almost sure convergence
under a mild positivity condition on $\mathbb{E}\left[T_{1}\right]$.

The main result below (\prettyref{thm:6-2}) places the residual-weighted
scheme above in direct comparison with two familiar baselines: on
the one hand, the i.i.d. sum 
\[
\sum^{n}_{k=1}U_{k}PU^{*}_{k}
\]
of random projections (or, more generally, $\sum^{n}_{k=1}T_{k}$)
whose frame operator concentrates about its mean without residual
weighting, and on the other hand, random products of contractions,
where one typically tracks 
\[
\left\Vert \prod^{n}_{k=1}\left(I-T_{k}\right)x\right\Vert 
\]
rather than a telescoping energy identity. The present scheme interpolates
these: it achieves an exact decomposition $I=\sum^{n}_{k=1}W^{*}_{k}W_{k}+R_{n}$
at every $n$ while preserving geometric control of the remainder
in expectation and in probability.
\begin{thm}[Direct comparison and geometric residual decay]
\label{thm:6-2} Suppose $\left(T_{k}\right)_{k\geq1}$ are i.i.d.
random positive contractions on $H$, let $R_{k}$, $W_{k}$ be defined
as in \eqref{eq:f-1}-\eqref{eq:f-2}, and set 
\[
S_{n}:=\sum^{n}_{k=1}W^{*}_{k}W_{k}.
\]
Then for every $n\geq1$:
\begin{enumerate}
\item \label{enu:f-1}Almost surely, $S_{n}+R_{n}=I$. In particular, 
\[
\mathbb{E}S_{n}=I-\mathbb{E}R_{n}.
\]
Moreover, for every $x\in H$, 
\[
\mathbb{E}\left[\left\langle x,W^{*}_{k}W_{k}x\right\rangle \mid\mathcal{F}_{k-1}\right]=\left\langle x,R^{1/2}_{k-1}\mathbb{E}\left[T_{1}\right]R^{1/2}_{k-1}x\right\rangle ,
\]
so that the expected increment at step $k$ is the mean update $\mathbb{E}T_{1}$
compressed by the current residual $R^{1/2}_{k-1}$.
\item \label{enu:f-2}(Geometric bounds for the residual in mean, and uniform
anytime tails). With $\rho:=\left\Vert I-\mathbb{E}\left[T_{1}\right]\right\Vert \in\left[0,1\right]$,
\[
\mathbb{E}\left\langle x,R_{n}x\right\rangle \leq\rho^{n}\left\Vert x\right\Vert ^{2},\qquad x\in H.
\]
Consequently, for every $\varepsilon>0$ and $x\in H$, 
\[
\mathbb{P}\left\{ \left\langle x,R_{n}x\right\rangle \geq\varepsilon\right\} \leq\frac{\rho^{n}\left\Vert x\right\Vert ^{2}}{\varepsilon},\qquad n\geq1.
\]
Equivalently, $\mathbb{P}\{\|R^{1/2}_{n}x\|^{2}\geq\varepsilon\}\leq\rho^{n}\|x\|^{2}/\varepsilon$.
These are nonasymptotic ``anytime'' bounds.
\item \label{enu:f-3}(Almost sure strong convergence). If $\rho<1$, then
for a countable dense subset $\mathcal{D}\subset H$ one has $\|R^{1/2}_{n}x\|\to0$
almost surely for every $x\in\mathcal{D}$. By separability and uniform
boundedness, $R_{n}\to0$ in the strong operator topology almost surely
on a full measure set. Therefore $S_{n}\uparrow I$ strongly almost
surely, and $\left(W_{k}\right)_{k\geq1}$ is almost surely a Parseval
operator-valued frame: 
\[
\sum^{\infty}_{k=1}\left\Vert W_{k}x\right\Vert ^{2}=\left\Vert x\right\Vert ^{2},\qquad x\in H.
\]
\item \label{enu:f-4}(Projection case and pathwise Parseval identity).
If $T_{k}=P_{k}$ are orthogonal projections almost surely, then for
every realization and every $n$ 
\[
\sum^{n}_{k=1}\left\Vert P_{k}R^{1/2}_{k-1}x\right\Vert ^{2}+\left\Vert R^{1/2}_{n}x\right\Vert ^{2}=\left\Vert x\right\Vert ^{2}\qquad(x\in H),
\]
and if $\rho<1$, the limit identity $\sum^{\infty}_{k=1}\left\Vert P_{k}R^{1/2}_{k-1}x\right\Vert ^{2}=\left\Vert x\right\Vert ^{2}$
holds almost surely for all $x$.
\end{enumerate}
\end{thm}

\begin{proof}
Part \eqref{enu:f-1} is immediate from \prettyref{lem:6-1} and linearity
of expectation. The conditional expectation identity for the $k$-th
increment follows from independence: 
\[
\mathbb{E}\left[W^{*}_{k}W_{k}\mid\mathcal{F}_{k-1}\right]=\mathbb{E}\left[R^{1/2}_{k-1}T_{k}R^{1/2}_{k-1}\mid\mathcal{F}_{k-1}\right]=R^{1/2}_{k-1}\mathbb{E}\left[T_{1}\right]R^{1/2}_{k-1}.
\]

For \eqref{enu:f-2}, the conditional contraction computed in \eqref{eq:f-5}
gives 
\begin{align*}
\mathbb{E}\left[X_{n}(x)\right] & =\mathbb{E}\left[\mathbb{E}\left[X_{n}(x)\mid\mathcal{F}_{n-1}\right]\right]\\
 & \leq\rho\mathbb{E}\left[X_{n-1}(x)\right]\leq\cdots\leq\rho^{n}X_{0}(x)=\rho^{n}\left\Vert x\right\Vert ^{2}.
\end{align*}
Markov's inequality yields the anytime tail bound.

For \eqref{enu:f-3}, fix $x\in H$ and $\varepsilon>0$. We have
\[
\sum_{n\geq1}\mathbb{P}\left\{ X_{n}(x)\geq\varepsilon\right\} \leq\sum_{n\geq1}\rho^{n}\left\Vert x\right\Vert ^{2}/\varepsilon<\infty.
\]
This implies, by the Borel-Cantelli lemma, $X_{n}(x)\to0$ almost
surely. 

Intersecting the resulting full measure sets over a countable dense
subset $\mathcal{D}$ gives almost sure convergence $\Vert R^{1/2}_{n}x\Vert\to0$
for all $x\in\mathcal{D}$. Since $\|R_{n}\|\leq1$ for all $n$ and
$R_{n}\geq0$, one extends the convergence to all $x\in H$ by a standard
density and uniform boundedness argument: for fixed $\omega$ in the
full measure set, the sequence $\left(R_{n}(\omega)\right)$ is bounded
and positive, so $\|R_{n}(\omega)y\|\leq\|y\|$. Given $y\in H$ and
$\delta>0$, choose $x\in\mathcal{D}$ with $\|x-y\|<\delta$, then
\begin{align*}
\|R_{n}(\omega)y\| & \leq\|R_{n}(\omega)(y-x)\|+\|R_{n}(\omega)x\|\\
 & \leq\|y-x\|+\|R_{n}(\omega)x\|\leq\delta+\|R_{n}(\omega)x\|.
\end{align*}
Let $n\to\infty$, and then $\delta\downarrow0$. We conclude that
$R_{n}(\omega)\to0$ strongly. The identity $S_{n}+R_{n}=I$ gives
$S_{n}\uparrow I$ strongly almost surely, and the Parseval identity
follows by letting $n\to\infty$ in the finite-$n$ identity from
\prettyref{lem:6-1}.

Part \eqref{enu:f-4} is the specialization of \prettyref{lem:6-1}
and part \eqref{enu:f-3} to projections. 
\end{proof}
\begin{rem}
The theorem exhibits three direct comparison points.
\begin{enumerate}
\item The expected frame operator in the residual-weighted scheme is not
simply $n$ times a fixed mean, but rather the telescoping difference
$I-\mathbb{E}\left[R_{n}\right]$. In particular, for each $x$ one
has 
\[
\sum^{n}_{k=1}\mathbb{E}\left[\|W_{k}x\|^{2}\right]=\|x\|^{2}-\mathbb{E}\left[\|R^{1/2}_{n}x\|^{2}\right]\geq\left(1-\rho^{n}\right)\|x\|^{2}.
\]
Thus the lower expected frame bound at time $n$ is $1-\rho^{n}$,
which increases monotonically to $1$ at a geometric rate governed
solely by $\left\Vert I-\mathbb{E}\left[T_{1}\right]\right\Vert $.
In contrast, for the unweighted i.i.d. sum $\sum^{n}_{k=1}T_{k}$
one has $\mathbb{E}\left[\sum^{n}_{k=1}T_{k}\right]=n\mathbb{E}\left[T_{1}\right]$.
While concentration inequalities may show that $\sum^{n}_{k=1}T_{k}$
is close to $n\,\mathbb{E}\left[T_{1}\right]$ with high probability,
there is no pathwise identity tying the partial sum to $I$, and no
automatic monotone improvement of a lower frame bound towards $1$
as $n$ grows. The residual-weighted scheme turns the mean update
$\mathbb{E}\left[T_{1}\right]$ into a preconditioned increment $R^{1/2}_{k-1}\mathbb{E}\left[T_{1}\right]R^{1/2}_{k-1}$,
adaptively pushing mass only where energy remains.
\item Compared to random products of contractions, one typically studies
\[
\left\Vert \prod^{n}_{k=1}\left(I-T_{k}\right)x\right\Vert 
\]
and obtains decay under various averaged coercivity or spectral gap
assumptions. Here, the same information is embedded, but with a stronger
structural conclusion: the product-type decay of the residual is accompanied
by an exact energy partition $\sum\|W_{k}x\|^{2}+\|R^{1/2}_{n}x\|^{2}=\|x\|^{2}$
at every stage, which directly identifies the analysis operators $W_{k}=T^{1/2}_{k}R^{1/2}_{k-1}$
as the building blocks of a Parseval operator-valued frame in the
limit.
\item The anytime bounds provide an explicit performance guarantee. For
a prescribed tolerance $\varepsilon\in\left(0,1\right)$, the number
of steps $n$ sufficient to ensure $\mathbb{E}\|R^{1/2}_{n}x\|^{2}\leq\varepsilon\|x\|^{2}$
for all $x$ is any $n$ with $\rho^{n}\leq\varepsilon$. With the
tail bound, the same $n$ controls $\|R^{1/2}_{n}x\|^{2}$ with high
probability uniformly over unit vectors. None of these bounds require
commutativity of the $T_{k}$.
\end{enumerate}
\end{rem}

Two corollaries relate to frequently useful specializations. The first
records a sufficient and easily verifiable coercivity condition.
\begin{cor}[Uniform mean coercivity]
\label{cor:6-4} If there exists $\gamma\in\left(0,1\right]$ with
$\mathbb{E}\left[T_{1}\right]\geq\gamma I$, then $\rho\leq1-\gamma$
and the conclusions \eqref{enu:f-2}--\eqref{enu:f-4} of \prettyref{thm:6-2}
hold with the explicit rate $\rho\leq1-\gamma$.
\end{cor}

\begin{proof}
\noindent The inequality $\mathbb{E}\left[T_{1}\right]\geq\gamma I$
is equivalent to $I-\mathbb{E}\left[T_{1}\right]\leq\left(1-\gamma\right)I$,
hence $\rho=\left\Vert I-\mathbb{E}\left[T_{1}\right]\right\Vert \leq1-\gamma$.
Then apply the theorem.
\end{proof}
The second corollary deals with the projection case, which is of independent
interest and yields a pathwise Parseval identity under a mild frequency
of excitation hypothesis.
\begin{cor}[Projections with nontrivial mean]
 If $T_{k}=P_{k}$ are projections and $\mathbb{E}\left[P_{1}\right]\geq\gamma I$
for some $\gamma\in\left(0,1\right]$, then almost surely 
\[
\sum^{\infty}_{k=1}\Vert P_{k}R^{1/2}_{k-1}x\Vert^{2}=\left\Vert x\right\Vert ^{2},\qquad x\in H.
\]
\end{cor}

\begin{proof}
\noindent This is the combination of \prettyref{thm:6-2} with \prettyref{cor:6-4}.
Note that $\rho\leq1-\gamma<1$ implies $R_{n}\to0$ strongly almost
surely and then letting $n\to\infty$ in the finite $n$ identity. 
\end{proof}
\begin{rem}
It is useful to spell out explicitly the comparison with the classical
i.i.d. sum. For any $n\geq1$, 
\[
\mathbb{E}S_{n}=I-\mathbb{E}R_{n}\qquad\text{while}\qquad\mathbb{E}\sum^{n}_{k=1}T_{k}=n\,\mathbb{E}T_{1}.
\]
Thus, although both models accumulate the same mean $\mathbb{E}T_{1}$,
the residual-weighted scheme converts it into a sequence of adapted
increments with exact telescoping, hence a deterministic upper frame
bound equal to 1 at every time and a lower expected frame bound increasing
to 1 geometrically. In contrast, the unweighted sum requires a separate
concentration analysis to control both bounds and admits no identity
pinning the partial sum to $I$. This structural difference is the
essence of the direct comparison.
\end{rem}

\section{Rank-one projections and adaptive normalization}\label{sec:6}

This section illustrates the residual-weighted scheme of \prettyref{sec:5}
in the concrete setting of rank-one projections. The example shows
how an arbitrary sequence of measurement vectors, even if highly redundant,
incomplete, or linearly dependent, is transformed by the residual-weighted
construction into a normalized Parseval operator-valued frame. No
commutativity, orthogonality, or structural assumptions are required.
The example also demonstrates, in a transparent way, the pathwise
telescoping identity and its consequences.

Let $H$ be a separable Hilbert space and let $\left\{ e_{k}\right\} _{k\ge1}$
be \textit{any} sequence of unit vectors in $H$. Define 
\[
T_{k}:=\left|e_{k}\left\rangle \right\langle e_{k}\right|,\qquad k\ge1.
\]
Each $T_{k}$ is a rank-one positive contraction with $T^{2}_{k}=T_{k}$.
The unweighted partial sum $\sum_{k\le n}T_{k}$ generally does not
converge to the identity operator, even in the case of i.i.d. sampling,
and typically exhibits unbounded growth in expectation. The residual-weighted
scheme transforms these operators into a normalized, pathwise convergent
family.

Recall the definitions from \prettyref{sec:5}: 
\[
R_{0}:=I,\qquad R_{k}:=R^{1/2}_{k-1}\left(I-T_{k}\right)R^{1/2}_{k-1},\qquad W_{k}:=T^{1/2}_{k}R^{1/2}_{k-1}.
\]
Since $T_{k}$ is a projection, we have $T^{1/2}_{k}=T_{k}$. Thus
\[
W_{k}=\left|e_{k}\left\rangle \right\langle e_{k}\right|R^{1/2}_{k-1}.
\]

We next compute the quantities $W_{k}$, $W^{*}_{k}W_{k}$, and $R_{k}$
explicitly, and verify the telescoping identity of \prettyref{lem:6-1}
in this concrete setting.

For $x\in H$, 
\[
W_{k}x=\left|e_{k}\right\rangle \left\langle e_{k}\right|R^{1/2}_{k-1}x=\left\langle e_{k},R^{1/2}_{k-1}x\right\rangle e_{k}.
\]
Hence $W_{k}$ maps every $x$ to a scalar multiple of $e_{k}$. Its
adjoint satisfies 
\[
W^{*}_{k}y=\left\langle e_{k},y\right\rangle R^{1/2}_{k-1}e_{k},
\]
and therefore 
\[
W^{*}_{k}W_{k}=\left|R^{1/2}_{k-1}e_{k}\left\rangle \right\langle R^{1/2}_{k-1}e_{k}\right|.
\]
For each $x\in H$, 
\[
\left\langle x,W^{*}_{k}W_{k}x\right\rangle =\left|\left\langle e_{k},R^{1/2}_{k-1}x\right\rangle \right|^{2}.
\]

The residuals satisfy 
\[
R_{k}=R_{k-1}-R^{1/2}_{k-1}\left|e_{k}\right\rangle \left\langle e_{k}\right|R^{1/2}_{k-1},
\]
and consequently 
\[
\left\langle x,R_{k}x\right\rangle =\left\langle x,R_{k-1}x\right\rangle -\left|\left\langle e_{k},R^{1/2}_{k-1}x\right\rangle \right|^{2},\qquad x\in H.
\]

Applying \prettyref{lem:6-1} yields the exact telescoping identity
\[
\sum^{n}_{k=1}W^{*}_{k}W_{k}+R_{n}=I.
\]
Evaluating this identity on a vector $x\in H$, we obtain 
\[
\sum^{n}_{k=1}\left|\left\langle e_{k},R^{1/2}_{k-1}x\right\rangle \right|^{2}+\left\langle x,R_{n}x\right\rangle =\left\Vert x\right\Vert ^{2}.
\]
This formula holds pathwise for every realization of the sequence
$\left\{ e_{k}\right\} $ and requires no independence assumptions.
The quantity $R_{n}$ represents the exact residual energy after the
first $n$ steps of the iterative scheme.

Suppose now that the vectors $\left\{ e_{k}\right\} $ are random
and drawn i.i.d. from a probability distribution $\mu$ on the unit
sphere of $H$ such that 
\[
\mathbb{E}\left|\left\langle e_{1},x\right\rangle \right|^{2}\ge\gamma\left\Vert x\right\Vert ^{2},\qquad x\in H
\]
for some constant $\gamma>0$. Since 
\[
\mathbb{E}\left[T_{1}\right]=\mathbb{E}\left[\left|e_{1}\left\rangle \right\langle e_{1}\right|\right]\ge\gamma I,
\]
the mean-coercivity condition of \prettyref{thm:6-2} holds with $\rho\le1-\gamma$.
It follows that 
\[
\mathbb{E}\left\langle x,R_{n}x\right\rangle \le\left(1-\gamma\right)^{n}\left\Vert x\right\Vert ^{2},
\]
and therefore $R_{n}\to0$ strongly almost surely. The telescoping
identity then gives 
\[
I=\sum^{\infty}_{k=1}W^{*}_{k}W_{k}\qquad\text{almost surely}.
\]
Thus the family $\left\{ W_{k}\right\} _{k\ge1}$ forms an operator-valued
Parseval frame almost surely, in the sense that 
\[
\sum^{\infty}_{k=1}\left\Vert W_{k}x\right\Vert ^{2}=\left\Vert x\right\Vert ^{2},\qquad x\in H.
\]

The rank-one setting therefore exhibits the essential features of
the general theory in their simplest form. The identity above provides
an adaptive Pythagorean decomposition of $\left\Vert x\right\Vert ^{2}$,
where each term reflects the geometry of the residual at that stage.
Even if the unnormalized projections $\left\{ T_{k}\right\} $ fail
to span the space or oversample certain directions, the residual-weighted
scheme produces a canonical adaptive normalization giving an exact
decomposition of the identity in the strong operator topology. 
\begin{rem}
The residual-weighted construction applies without modification to
any separable Hilbert space. In particular, let $\mu$ be a Cantor-type
or self-similar measure on $\mathbb{R}$, and consider $H=L^{2}(\mu)$.
If $\left\{ e_{k}\right\} $ is any sequence of unit vectors in $L^{2}(\mu)$
(for example, restrictions of the exponential functions $x\mapsto e^{2\pi i\lambda_{k}x}$
to the support of $\mu$), then one may define 
\[
T_{k}:=\left|e_{k}\left\rangle \right\langle e_{k}\right|
\]
and run the residual-weighted scheme exactly as in the rank-one example.
Under the mean-coercivity condition 
\[
E\left|\left\langle e_{1},x\right\rangle \right|^{2}\,\ge\,\gamma\left\Vert x\right\Vert ^{2},\qquad x\in H,
\]
for some $\gamma>0$, \prettyref{thm:6-2} implies that $R_{n}\to0$
strongly almost surely and the vectors 
\[
f_{k}:=R^{1/2}_{k-1}e_{k}
\]
form a Parseval frame almost surely: 
\[
\sum^{\infty}_{k=1}\left|f_{k}\left\rangle \right\langle f_{k}\right|=I
\]
in the strong operator topology. 

Thus the residual-weighted iteration provides a canonical adaptive
normalization mechanism even in fractal settings, where classical
Fourier systems typically fail to form frames or orthonormal bases.
The resulting frame vectors $f_{k}$ inherit their initial structure
through the nonlinear transformation $R^{1/2}_{k-1}$, but are not
in general pure exponentials.
\end{rem}

\bibliographystyle{amsalpha}
\bibliography{ref}

\end{document}